\theoremstyle{plain}
\newtheorem{theorem}{Theorem}[section]
\newtheorem*{theorem*}{Theorem}
\newtheorem{proposition}[theorem]{Proposition}
\newtheorem*{proposition*}{Proposition}
\newtheorem{lemma}[theorem]{Lemma}
\theoremstyle{definition}
\newtheorem{example}[theorem]{Example}
\newtheorem{remark}[theorem]{Remark}
\newtheorem*{question}{Question}
\def\vp{\varphi}
\newcommand{\vP}{\varPi}
\newcommand{\vS}{\varSigma}
\newcommand{\vD}{\varDelta}
\DeclareMathOperator{\rank}{\mathrm{rk}}
\def\rul#1#2#3{\prooftree     #1 \justifies #2 \using{#3} \endprooftree}
\newlength{\tmplenA}
\newlength{\tmplenB}
\newcommand{\godel}[1]{\settowidth{\tmplenA}{$\ulcorner$}  
	\settoheight{\tmplenB}{$#1$}
	\setlength{\tmplenB}{0.9\tmplenB}
	\addtolength{\tmplenB}{-\tmplenA}
	\raisebox{\tmplenB}{$\ulcorner$}\hspace{-0.5\tmplenA}
	#1
	\hspace{-0.5\tmplenA}\raisebox{\tmplenB}{$\urcorner$}}
\newcommand{\god}[1]{\settowidth{\tmplenA}{$\ulcorner$}        
	\settoheight{\tmplenB}{$#1$}
	\setlength{\tmplenB}{1.1\tmplenB}
	\addtolength{\tmplenB}{-\tmplenA}
	\raisebox{\tmplenB}{$\ulcorner$}\hspace{-0.2\tmplenA}
	#1
	\hspace{-0.2\tmplenA}\raisebox{\tmplenB}{$\urcorner$}}
\newcommand{\is}{\mathrm{I}\vS}
\newcommand{\ind}[1]{\mathrm{I}{#1}}
\newcommand{\lind}[1]{(\mathrm{I}#1)^{-}}
\def\ti{\mathrm{TI}}
\newcommand{\rti}[1]{\mathrm{TI}_{#1}}
\def\rfn{\mathrm{RFN}}
\def\lrfn{\mathrm{Rfn}}
\newcommand{\rrfn}[1]{\mathrm{RFN}_{#1}}
\newcommand{\prog}{\mathrm{Prog}}
\newcommand{\wf}{\mathrm{WF}}
\def\epsnot{\varepsilon_0}
\newcommand{\con}{\mathrm{Con}}
\newcommand{\pr}{\mathrm{Pr}}
\newcommand{\tr}{\mathrm{Tr}}
\newcommand{\T}{\mathrm{T}}
\newcommand{\ax}{\mathrm{Ax}}
\newcommand{\ea}{\ensuremath{\mathsf{EA}}}         
\newcommand{\pa}{\ensuremath{\mathsf{PA}}}   
\newcommand{\rca}{\ensuremath{\mathsf{RCA}_0}} 
\newcommand{\wkl}{\ensuremath{\mathsf{WKL}_0}}
\newcommand{\aca}{\ensuremath{\mathsf{ACA}_0}}
\newcommand{\atr}{\ensuremath{\mathsf{ATR}_0}}
\newcommand{\ca}{\ensuremath{\mathsf{CA}}} 
\newcommand{\ac}{{\sf AC}}    
\newcommand{\imp}{\rightarrow}
\newcommand{\biimp}{\leftrightarrow}
\title[Uniform reflection in second order arithmetic]{A note on fragments of uniform reflection in second order arithmetic}
\thanks{The author's research was supported by the Alexander von Humboldt foundation.}
\author[Frittaion]{Emanuele Frittaion}
\address{Department of Mathematics, Technische Universit\"{a}t Darmstadt, Germany}
\keywords{second order arithmetic, uniform reflection,  $\omega$-arithmetic, predicative cut elimination}
\begin{document}

\subjclass[2020]{03F03, 03F05, 03F30, 03F35, 03B30}

\begin{abstract} 
We consider fragments of uniform reflection for formulas in the analytic hierarchy over theories of second order arithmetic. The main result is that  for any  second order arithmetic theory $T_0$ extending $\rca$ and  axiomatizable by a  $\Pi^1_{k+2}$ sentence,   and for any  $n\geq k+1$,
\[    T_0+ \rrfn{\vP^1_{n+2}}(T)  \ = \  T_0 + \rti{\vP^1_n}(\epsnot), \]
\[     T_0+ \rrfn{\vS^1_{n+1}}(T) \ = \      T_0+ \rti{\vP^1_n}(\epsnot)^{-},   \]
where $T$ is $T_0$ augmented with full induction, and $\rti{\vP^1_n}(\epsnot)^{-}$ denotes the schema of transfinite induction up to $\epsnot$ for $\vP^1_n$ formulas without set parameters. 
\end{abstract}

\maketitle

\section{Introduction}
The following benchmark results showcase the  relationship between uniform reflection and induction in the context of first order arithmetic:
\[   \tag{1} \ea+ \rfn(\ea)\ =\ \pa, \]
where $\ea$ is Kalm\'{a}r elementary arithmetic, and
\[   \tag{2} \pa+\rfn(\pa) \ =\ \pa+\ti(\epsnot). \]  
(1) is usually attributed to  \cite{KL68}. (2) is a special case of \cite[Theorem 12]{KL68}.

This note is concerned with  uniform reflection in the context of second order arithmetic.\footnote{ For an extensive study of iterated  uniform reflection in second order arithmetic, see  Pakhomov and Walsh \cite{PW21, PW22}.} It can be gleaned from 
Kreisel and L\'{e}vy \cite[Theorem 14]{KL68} that
\[    {\sf RCA} +\rfn({\sf RCA}) \ = \ {\sf RCA} +\ti(\epsnot), \]
where ${\sf RCA}$ is $\rca$ together with full induction.\footnote{Kreisel and L\'{e}vy prove the result for a slightly different system, denoted ${\sf Z}_1$, which appears in Howard and Kreisel \cite{HK66}. The theory ${\sf Z}_1$ consists of number and function variables, axioms for the constants zero, successor, pairing, projections,  axioms expressing the closure of all functions under composition and primitive recursion, and full induction (induction for all formulas in the language).} 

The same proof-theoretic methods of \cite{KL68}, namely, formalized cut elimination, show that equations (1) and (2) lift up  to any  theory $T$ of second order arithmetic of the form $T_0$ together with full induction, where $T_0$ is  any finitely axiomatizable  extension of $\rca$. We thus have (see Theorem \ref{thm uniform})
\[  \tag{3}  T_0 + \rfn(T_0) \ = \ T, \]
\[   \tag{4} T_0 + \rfn(T) \ = \ T_0 + \ti(\epsnot). \]
More in general (cf.\ \cite[Theorem 12]{KL68}), 
\[   T_0+ \rfn(T+\ti(\alpha)) \ =\ T_0 + \ti(\varepsilon(\alpha)), \]
where $\varepsilon(\alpha)$ is the least $\varepsilon$-number beyond $\alpha$.
The schema $\ti(\alpha)$ of transfinite induction along $\alpha$ consists of formulas
\[  \forall x\, (\forall y\prec x\, \vp(y)\imp \vp(x))\imp \forall x\, \vp(x), \]
where $\vp(x)$ ranges over all formulas and   $\prec $ is a primitive recursive ordinal notation system for  $\alpha$.  

Anyone familiar with (a bit of) ordinal analysis will readily realize that (3) and (4) hold  because of the unlimited  amount of induction, respectively transfinite induction up to $\epsnot$, available. The point we are trying to make is that the so-called $\vP^1_1$ proof-theoretic ordinals of $T_0$ and  $T$ may be way bigger than $\epsnot$.  Recall that the $\vP^1_1$ proof-theoretic ordinal $|S|$ of a second order arithmetic theory $S$ can be defined as the supremum of the provably well-founded  ordinals of $S$. These are  the ordinals $\alpha$ such that $S\vdash \wf(\alpha)$, where $\wf(\alpha)$ expresses the well-foundedness of $\alpha$ and corresponds to the following single instance of transfinite induction \[ \forall X\,  (\forall \beta\, (\forall \gamma<\beta\, (\gamma\in X) \imp \beta\in X)\imp \forall \beta<\alpha \, (\beta\in X)). \]
Actually, one always works with a given primitive recursive ordinal notation system, so that the above definition is relative to the choice of such  notation system (see \cite{R99} for an overview). 
Now, under certain conditions (cf.\ \cite[Proposition 2.13]{R99}), $|S|$ coincides with the least ordinal $\alpha$ such that \[ S+\wf(\alpha)\vdash \con(S).\] Again, the above definition refers to a given ordinal notation system.   In particular,  for theories of great proof-theoretic strength,  it   is certainly not the case that $T_0 + \wf(\epsnot) \vdash \con(T_0)$, let alone  $T_0 + \wf(\epsnot) \vdash \rfn(T)$. So much for that. 

A more subtle point is that the restriction to  finitely axiomatizable theories  is  a necessary one, as the result does not apply to recursively enumerable  theories $T_0$ of bounded complexity.
The assumption plays a role in the direction from (transfinite) induction to reflection.  The reason, roughly speaking, is that an infinite theory  may not prove that all its axioms are true. In fact, for sufficiently strong theories, such condition entails  finite 
axiomatizability. 
Note that main  subsystems of reverse mathematics, the so-called {\em Big Five},  are finitely axiomatizable \cite{S09}.  
We refer to Section \ref{discussion} for a discussion of this issue.

In this note we obtain the following refinement of  (3) and (4). We show in Theorem \ref{fragments} that if $T_0$  is any theory of second order arithmetic extending $\rca$ and axiomatized by a $\vP^1_2$  (in general, $\vP^1_{k+2}$) sentence, $T$ is $T_0$ plus full induction, and $n\geq 1$ (in general, $n\geq k+1$), then over $T_0$
	\[  \tag{5}  \rrfn{\vP^1_{n+2}}(T_0) \ = \ \ind{\vP^1_n}  \ \supseteq \  \lind{\vP^1_n} \ = \ \rrfn{\vS^1_{n+1}}(T_0),    \]
\[  \tag{6}  \rrfn{\vP^1_{n+2}}(T)  \ = \ \rti{\vP^1_n}(\epsnot)  \ \supseteq \ \rti{\vP^1_n}(\epsnot)^{-}  \ = \  \rrfn{\vS^1_{n+1}}(T).   \]

The superscript ${}^-$ refers to the restriction of the pertaining schema to formulas without set parameters.  
We  prove in Theorem \ref{strict inclusion} that, under certain conditions, the inclusions in (5) and (6) are strict. This is obtained by a standard application of G\"{o}del's second incompleteness theorem. 

In first order arithmetic we have a neat correspondence between uniform reflection and induction (see  Leivant \cite{L83} and Ono \cite{O87}):  for all $n\geq 1$,
\[   \ea +  \rrfn{\vP_{n+2}}(\ea)\ =\ \ea +\rrfn{\vS_{n+1}}(\ea)\ =\ \ea + \ind{\vP_n}.   \]
If anything, we can say that a less clear-cut picture emerges when we move to second order arithmetic.\footnote{It should be noted that parameter free versions of induction in first order arithmetic, considered, e.g., in Kaye, Paris, and  Dimitracopoulos \cite{KPD88} from a model-theoretic point of view,  can be characterized by using \emph{relativized} forms of local reflection (see Beklemishev \cite[Sect.\ 3]{B99}).} 
On the other hand (cf.\ \cite[Sect.\ 3.1]{PW22}), one could recover   the equivalence between  $\vP^1_{n+2}$  and $\vS^1_{n+1}$ uniform reflection  by considering a   relativized version of uniform reflection with set parameters.  More precisely, given a set $X$ of natural numbers and a second order arithmetic theory $T$, let $T(X)$  be the extension of $T$  with  {\em oracle axioms} $\bar n\in \bar X$ for $n\in X$ and $\bar n\notin \bar X$ for $n\notin X$, where $\bar X$ is a new set constant. Note that this construction makes sense in $\rca$ since the theory $T(X)$ is recursive in $T$ and $X$. One can then introduce the schema $\rfn^*(T)$ given by formulas 
\[  \forall x\, \forall X\, (\pr_{T(X)}(\godel{\vp(\dot x,\dot X)}) \imp \vp(x,X)).  \]
Now, the proofs of Theorem \ref{thm uniform} and Theorem \ref{fragments} go  through \emph{mutatis mutandis}. 
Note that
\[   \pr_T(\godel{\forall X\,\vp(X)}) \imp \forall X\,\pr_{T(X)}(\godel{\vp(\dot{X})}) \]
is provable in $\rca$. In particular, one obtains 
\begin{align*}
	T_0 + \rrfn{\vP^1_{n+2}}^*(T_0) &= \ T_0 + \ind{\vP^1_n}   \ = \  T_0 + \rrfn{\vS^1_{n+1}}^*(T_0),   \\
	 T_0 + \rrfn{\vP^1_{n+2}}^*(T)  & = \ T_0 + \rti{\vP^1_n}(\epsnot) \ =\ T_0 + \rrfn{\vS^1_{n+1}}^*(T), 
\end{align*}
where  $T_0$, $T$ and $n$ are as above.

\section{Basics}\label{basics}

\subsection{First order arithmetic}
For the purposes of this paper, let the language of first order arithmetic consist of finitely many symbols including $0,1,+,\times,=,<$. As usual, $\vD_0=\vP_0=\vS_0$ denotes the class of formulas built up from atomic formulas by means of boolean connectives and bounded quantifiers of the form $\exists x< t$ and $\forall x<t$, where $t$ is a term. The arithmetic hierarchy for $n>0$ is thus defined: $\vP_{n+1}=\{\forall x\, \vp(x)\mid \vp(x)\in \vS_n\}$  and dually $\vS_{n+1} =\{ \exists x\, \vp(x)\colon \vp(x)\in \vP_n\}$. 

\subsection{Reflection}
We assume a primitive recursive G\"{o}del numbering  $\god{\vp}$  of formulas. 
The uniform reflection principle $\rfn(T)$ for a theory $T$ is a schema consisting of formulas
\[      \forall x\,(\pr_T(\godel{\vp(\dot x)})\imp \vp(x)),  \]
where $\vp(x)$ is a formula with at most the displayed free variable and $\pr_T(y)$ is a canonical provability predicate for $T$ (cf.\ \cite{F60,F62}). The formula   $\pr_T(\godel{\vp(\dot x)})$ can be seen as  an abbreviation of \[\exists y\, (\pr_T(y)\land \sigma(\god{\vp},x,y)).\] Here,  $\sigma(u,x,y)$ is a $\vS_1$ formula defining  the primitive recursive operation  $(\god{\vp},n)\mapsto \godel{\vp(\bar n/v)}$, where $v$ is the only free variable of $\vp$ and $\vp(\bar n/v)$   denotes the substitution of the numeral $\bar n$ for  $v$ in $\vp$.  For ease of notation, we omit the overline notation for numerals when writing $\god{\vp}$ inside a formula.

\subsection{Second order arithmetic}
The language of second order arithmetic is  two-sorted, with
first order variables $x,y,z,\ldots$ intended to range over natural numbers, and
second order variables $X,Y,Z,\ldots$ intended to range over sets of natural numbers. It is obtained from the language of first order arithmetic by adding a membership relation symbol $\in$ connecting the two sorts. The equality relation symbol  $=$ only applies to first order terms.

The  classes of formulas $\vD^0_0$, $\vP^0_n$, $\vS^0_n$,  are defined just as in the first order case, but now set parameters are allowed. In this context, a formula is arithmetic if it does not contain set quantifiers, that is, quantifications of the form $\forall X$ and $\exists X$. Note that an arithmetic formula may contain free set variables. 
A formula  is  $\vP^1_n$  if it is of the form
$\forall X_1\, \exists X_2 \cdots Q X_n\,\vp$, 
where $\vp$ is arithmetic. The definition of $\vS^1_n$ formulas is dual.

The theory known as (full) second order arithmetic, often denoted ${\sf Z}_2$, is given by the usual first order axioms for zero, successor, addition, multiplication, and less than relation, together with  comprehension and induction schemata\smallskip
\[ \tag{Comprehension} \exists X\,\forall x\,(x\in X\leftrightarrow \vp(x)), \]
\[ \tag{Induction}  \vp(0)\land \forall x\,(\vp(x)\rightarrow \vp(x+1))\rightarrow\forall x\, \vp(x),  \]
where $\vp$ ranges over all formulas.
Note that every instance of  induction  can be obtained from comprehension and the induction axiom
\[    0\in X\land \forall x\,(x\in X\imp x+1\in X)\imp \forall x\,(x\in X). \]

The main subsystems of reverse mathematics \cite{S09}, also known as the Big Five, are obtained by prescribing how much induction one can use and what sets one can form
(in terms of comprehension or set existence axioms).   These are 
$\rca$ (Recursive Comprehension Axiom), $\wkl$ (Weak K\"{o}nig's Lemma), $\aca$ (Arithmetical Comprehension Axiom), $\atr$ (Arithmetical Transfinite Recursion) and $\vP^1_1$-$\ca_0$ (Comprehension for $\vP^1_1$ formulas).    $\rca$,  the standard base theory of reverse mathematics, consists of $\vD^0_1$ comprehension and $\vS^0_1$ induction. It is well known that $\rca$ is a conservative extension of $\is_1$, that is, Peano arithmetic $\pa$ with induction restricted to $\vS_1$ formulas. The theory $\aca$ consists of comprehension and induction for all arithmetic formulas.  

\subsection{Partial truth definitions}
In first order arithmetic one has access for any $n\geq 1$ to a  $\vS_n$ (resp.\ $\vP_n$)  partial truth definition for the class $\vS_n$ (resp.\ $\vP_n$) over $\is_1$. The construction involves a partial truth definition for $\vD_0$ formulas, of complexity $\vD_1$ over $\is_1$ (cf.\ \cite[Ch.\ 1, Sect.\ 1(d)]{HP98}). By the same token, one can construct partial truth definitions for formulas of bounded complexity, say of bounded rank, over $\is_1$ (cf.\ \cite[1.5.4]{T73}). 

In the context of second order arithmetic, one can define truth predicates for arithmetic formulas and $\vP^1_n$ formulas but they fail to be provably so over weak fragments such as $\rca$. However, for the purposes of this paper, it will be sufficient to work with partial truth definitions for  formulas of bounded complexity. Let $R_m$ denote  the class of formulas of rank at most $m$ and let $P_{n,m}$ denote the (closure under subformulas of the) class of $\vP^1_n$ formulas with arithmetic matrix of rank at most $m$. We define the rank as usual: the rank is a natural number and the rank of a compound formula is bigger than the rank of its components. Notice that we make no distinction between number and set quantifiers. The construction of partial truth definitions in first order arithmetic easily extends to arithmetic formulas with set parameters.  
In particular,  for any choice of $n$ and $m$, one obtains a partial truth predicate for $R_m$ and a $\vP^1_n$ truth predicate for $P_{n,m}$  formulas over $\rca$. 

We will use partial truth definitions of the first kind (for $R_m$)  in the proof of Theorem \ref{thm uniform}. In Theorem \ref{fragments}  we will choose a $\vP^1_n$ partial truth definition for  sufficiently many $\vP^1_n$ formulas, namely, for  a sufficiently large  class of the form  $P_{n,m}$.

At one point (see Lemma \ref{strict}) we will use the fact that  $\vP^1_n$ (resp.\ $\vS^1_n$) uniform reflection can be finitely axiomatized over $\aca$ for each $n\geq 1$. This can be attained by means of universal formulas.   We say that a $\vP^1_n$  formula $\vp_U(e,x)$ with displayed free variables is {universal} $\vP^1_n$  in a theory $T$ if for all $\vP^1_n$  formulas $\vp(x)$, $T$ proves $\forall x\, (\vp(x)\biimp \vp_U(\god{\vp}, x))$.  
Universal $\vS^1_n$ formulas are defined in the exact same way. The existence in $\aca$ of universal $\vP^1_n$ and $\vS^1_n$ formulas for every $n\geq 1$ is basically folklore. There is some leeway in designing universal formulas. For example, one possible route involves the following two steps in the construction of a universal $\vP^1_1$ formula with set parameters; it is then routine to  build up universal $\vP^1_n$  and $\vS^1_n$ formulas for all $n\geq 1$.  First, given a $\vS^0_1$ formula $\vp(x,f,X_1,\ldots X_k)$, where $f$ denotes a function from $\omega$ to $\omega$,  one can  primitive recursively find an index  $e$ such that
\begin{align*} \vp(x,f,X_1,\ldots,X_k) & \biimp \{e\}^{f\oplus X_1\oplus\cdots \oplus X_k}(x)\downarrow\, \\  & \biimp \exists s\, \vartheta(e,x,s,\bar f(s), \bar X_1(s),\ldots,\bar X_k(s)), \end{align*} 
\sloppy  where $\vartheta(e,x,s,z)$ is a fixed $\vD^0_0$ formula, $\bar f(s)$ encodes the sequence ${\langle f(0),\ldots,f(s-1)\rangle}$ and $\bar X_i(s)$ encodes the first $s$ bits of the characteristic function of $X_i$. This version of the enumeration theorem can be formalized in $\rca$ (cf.\ \cite[Theorem\ II.2.7, p.\ 68]{S09}). Second, given a $\vP^1_1$ formula $\vp(x,X_1,\ldots,X_k)$, one can primitive recursively construct a $\vS^0_1$ formula $\vp_0(x,f,X_1,\ldots,X_k)$ such that $\vp(x,X_1,\ldots,X_k)\biimp \forall f\, \vp_0(x,f,X_1,\ldots,X_k)$. This is  Kleene normal form theorem for $\vP^1_1$ formulas and can be formalized in $\aca$ (cf.\  \cite[Lemma V.1.4, p.\ 169]{S09}). Another  approach (cf.\ \cite[Sect.\ 2.1]{PW21}) is to further formalize the second step with the help of a partial truth definition for $\vS^0_1$ formulas. 

We refer the reader to \cite[Sect.\ 2.1]{PW21} for more background on partial truth predicates in second order arithmetic.


\section{The uniform reflection principle in analysis}\label{sec uniform}
In this section we outline a proof of the following theorem. 

\begin{theorem}[Essentially, Kreisel and L\'{e}vy \cite{KL68}] \label{thm uniform}
Let $T_0\supseteq\rca$ be a finitely axiomatizable second order arithmetic theory.
 Let $T$ be $T_0$ plus the schema of full induction. Then 
\begin{equation*}
 \tag{3} T_0 +\rfn(T_0)= T.
\end{equation*}
\begin{equation*}
 \tag{4} T_0 + \rfn(T)= T_0 + \ti(\epsnot). 
\end{equation*}
\end{theorem}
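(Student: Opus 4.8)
The plan is to adapt the classical Kreisel–Lévy argument, based on formalized cut elimination for $\omega$-logic (infinitary arithmetic), to the two-sorted setting. First I would set up the infinitary calculus: an $\omega$-arithmetic proof system in which the numerical $\omega$-rule replaces numerical induction, but set quantifiers are still handled by the usual finitary rules (this is the standard trick making the proof of (4) possible at all — set variables are treated as free parameters). Finitely many axioms of $T_0$ become, after embedding, finitely many extra axioms of bounded logical complexity; since $T_0$ is finitely axiomatizable we may take their conjunction $A$ and work with the single sentence $A$. The key structural fact is the \emph{embedding lemma}: any $T$-proof of a sentence $\vp$ embeds into an $\omega$-proof of $\vp$ from $A$ of some finite cut rank and ordinal height $<\omega\cdot 2$ or so, with cut formulas whose rank is bounded by a fixed $m = m(\vp)$; and conversely (the \emph{boundedness/soundness lemma}) an $\omega$-proof of height $<\epsnot$ with bounded cut rank can be verified, using a partial truth predicate for $R_m$, to yield a true conclusion, and this verification is formalizable in $\ti(\epsnot)$ over $T_0$ (resp.\ in $T$ for the finite-height version).

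For the inclusion $T_0 + \ti(\epsnot) \supseteq T_0 + \rfn(T)$ in (4): given a $\vp$, reason in $T_0 + \ti(\epsnot)$. Suppose $\pr_T(\godel{\vp(\dot x)})$. Formalize the embedding to get (provably) an $\omega$-proof of $\vp(\bar x)$ from $A$ of some height $<\epsnot$ and finite cut rank, then apply formalized predicative cut elimination: eliminating one level of cuts costs an exponential jump in the ordinal, so $r$ rounds take height $<\epsnot$ to height still $<\epsnot$ (this is exactly where $\epsnot$, rather than a smaller ordinal, is needed). The resulting cut-free (or low-rank) $\omega$-proof has bounded rank, so by the partial-truth soundness lemma — whose verification needs transfinite induction up to the ordinal of that proof, i.e.\ $<\epsnot$ — we conclude $\vp(x)$. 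The other inclusion (4), $\subseteq$, is the easy direction: $T$ proves each instance of $\ti(\epsnot)$ (full induction gives transfinite induction up to $\epsnot$ by the usual Gentzen-style argument), hence $\rfn(T)$ proves it too since $T$ proves it and $\rfn$ is applied to $T$; more directly $\ti(\epsnot)$ is $\rfn(T)$-provable because $T\vdash \ti(\epsnot)$ and $\rfn(T)$ for each instance gives back that instance — actually one checks $T_0+\rfn(T)\vdash T$ first (as in (3)) and then $T\vdash \ti(\epsnot)$. For (3): $T_0+\rfn(T_0)\supseteq T$ runs the same cut-elimination argument but now the $\omega$-proof coming from a \emph{finitary} $T_0$-proof has finite height, so only finitely many cut-elimination rounds and finite ordinals are involved, and the soundness verification needs only ordinary induction on a standard-length computation, available already in $T_0 + \rfn(T_0)$ via the $R_m$ truth predicate (together with $\rfn(T_0)$ to know that $A$, i.e.\ the axioms of $T_0$, is true); conversely $T \vdash \rfn(T_0)$ because $T$ can define truth for formulas of each bounded rank, prove the axioms of $T_0$ true, and prove soundness of the finitary calculus restricted to that rank — a standard reflexive-induction/partial-truth argument using full induction.

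The main obstacle — and the point that genuinely uses the finite axiomatizability hypothesis on $T_0$ — is the soundness/reflection step in the directions \emph{towards} reflection: to run formalized cut elimination and the partial-truth soundness argument, the ambient theory must prove that (the conjunction of) the axioms of $T_0$ is true under the relevant partial truth predicate. For a finitely axiomatized $T_0$ this is a single instance of a truth predicate applied to the bounded-rank sentence $A$, and is handled by $T_0$ itself (for (3)) or trivially (for (4), where $T\supseteq T_0$ already has everything); for an r.e.\ but not finitely axiomatized $T_0$ this would require proving a uniform truth principle for an unbounded class of axioms, which fails in general. The remaining technical care is bookkeeping: making sure the rank bound $m(\vp)$ is genuinely finite after cut elimination (predicative cut elimination lowers rank but the conclusion's rank and the axiom's rank set a fixed ceiling), that the ordinal arithmetic stays below $\epsnot$ through the finitely many elimination rounds, and that all the syntactic manipulations (substitution of numerals, the operation $\sigma$ from the definition of $\pr_T$, the $\omega$-rule premises coded as a total recursive function) are provably total and well-behaved in $\rca$. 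These are the standard lemmas of formalized $\omega$-logic and I would cite \cite{KL68} (and the textbook treatments) rather than reprove them.
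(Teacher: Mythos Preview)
Your treatment of the cut-elimination directions (showing $T\vdash\rfn(T_0)$ and $T_0+\ti(\epsnot)\vdash\rfn(T)$ via embedding into $\omega$-logic, predicative cut elimination, and soundness against a partial truth predicate) is essentially the paper's argument and is fine. The problem lies in the ``easy'' direction of (4).

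You assert that $T\vdash\ti(\epsnot)$ (``full induction gives transfinite induction up to $\epsnot$ by the usual Gentzen-style argument''), and then deduce $T_0+\rfn(T)\vdash\ti(\epsnot)$ essentially by quoting that theorem. This is false: Gentzen's argument shows only that $T$ proves $\prog(\vp)\imp\forall x\prec\omega_k\,\vp(x)$ for each \emph{fixed standard} $k$, not that $T$ proves transfinite induction up to $\epsnot$ itself. Indeed, if $T\vdash\ti(\epsnot)$ held, then combined with the direction of (4) you did prove, one would get $T\vdash\rfn(T)$, contradicting G\"odel. The paper's argument supplies the missing step: the map $k\mapsto\text{(proof in $T$ of }\prog(\vp)\imp\forall x\prec\omega_k\,\vp(x))$ is primitive recursive, so
\[ T_0\vdash\forall z\,\pr_T(\godel{\prog(\vp)\imp\forall x\prec\omega_{\dot z}\,\vp(x)}); \]
now apply \emph{uniform} reflection with $z$ as the reflected variable to obtain $\forall z\,(\prog(\vp)\imp\forall x\prec\omega_z\,\vp(x))$, and conclude since $\epsnot=\sup_k\omega_k$. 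The uniformity of the reflection schema over numerical instances is precisely what bridges the gap between ``$T$ proves each $\ti(\omega_k)$'' and ``$\ti(\epsnot)$ holds''.

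Your handling of (3) has the same shape of confusion. For $T_0+\rfn(T_0)\supseteq T$ you invoke cut elimination on $T_0$-proofs and a soundness verification inside $T_0+\rfn(T_0)$, but that is the machinery for the \emph{other} inclusion (deriving reflection from induction), and it does nothing to produce an instance of induction. The paper's argument for this direction is direct and uses no cut elimination: for each standard $n$, the sentence $\vp(0)\land\forall x\,(\vp(x)\imp\vp(x+1))\imp\vp(\bar n)$ is a logical validity, provably in $\rca$ and primitive recursively in $n$; hence $T_0\vdash\forall x\,\pr_{T_0}(\godel{\vp(0)\land\forall x\,(\vp(x)\imp\vp(x+1))\imp\vp(\dot x)})$, and uniform reflection over $T_0$ yields induction for $\vp$. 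This is exactly the same pattern as the corrected argument for (4) above, with $n\mapsto\vp(\bar n)$ in place of $k\mapsto\omega_k$.
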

\begin{proof}
(3) For the forward direction note that for every standard $n$ the formula
\[        \vp(0)\land \forall x\,(\vp(x)\imp \vp(x+1)) \imp  \vp(\bar n) \]
is provable in classical logic, and hence in any theory whatsoever. The construction of such a proof is primitive recursive (indeed elementary recursive) in $n$.  This can be formalized in $\rca$. Therefore
\[   T_0\vdash \forall x\, \pr_{T_0}(\godel{\vp(0)\land \forall x\,(\vp(x)\imp \vp(x+1)) \imp  \vp(\dot x)}). \]
It follows that 
\[   T_0 +\rfn(T_0)\vdash \forall x\,(\vp(0)\land \forall x\,(\vp(x)\imp \vp(x+1)) \imp  \vp(x)), \]
which is classically equivalent to induction for $\vp$. 

For the other direction, we use cut elimination for classical logic, which is available in $\rca$. More in detail, fix a sentence $\psi$  axiomatizing  $T_0$ and let $\vp(x)$ be  given . We aim to show 
\[   T\vdash \pr_{T_0}(\godel{\vp(\dot x)})\imp \vp(x). \]
For convenience, let us consider a Tait style sequent calculus for classical logic.\footnote{Tait one-sided calculi for first order classical logic (with equality) and cut elimination thereof are presented  in e.g.\ \cite[Sect.\ 3.6]{TS20} and \cite[Sects.\ 1.2, 2.1.2 and 3.1.1]{A20}. Here, of course, we are dealing with a two-sorted variant of any such  calculus.}  
We reason informally in $T$. If $\vp(\bar n)$ is provable in $T_0$, then  there is a finite cut-free proof of the sequent $\neg\psi, \vp(\bar n)$.   By induction on the height of the proof, one shows that every sequent in the proof is true.  Now, $\psi$ is true, and so $\vp(\bar n)$ must be true.   The induction argument can be formalized in $\sf RCA$ with the aid of a partial truth predicate for $R_m$ such that both $\neg \psi$ and $\vp(x)$ are in $R_m$. Note in fact that by the subformula property a cut-free proof  consists only of subformulas of the end sequent.

(4) The proof that $T_0 + \rfn(T)$ entails $\ti(\epsnot)$ is based on a straightforward generalization of Gentzen's lifting result \cite[p.\ 293]{G69} (i.e., the closure of provable transfinite induction under $\alpha\mapsto\omega^\alpha$ over $\pa$). Let $\prog(\vp)$ be a shorthand for $\forall x\,(\forall y\prec x\,\vp(y)\imp \vp(x))$.  Then for  every formula $\vp(x)$ and for every standard  number $k$,  $T$ proves 
\[    \prog(\vp)\imp \forall x\prec \omega_k\, \vp(x), \]
where $\omega_0=1$ and $\omega_{k+1}=\omega^{\omega_k}$. The construction of such a proof is uniform and gives a primitive recursive function that accepts a  $\vP^1_n$ formula $\vp(x)$ and a standard number $k>0$ as inputs and outputs a proof in $\rca$ plus induction for $\vP^1_{n+k}$ formulas of $\prog(\vp)\imp \forall x\prec\omega_{k+1}\,\vp(x)$.   This formalizes in $\rca$. The upshot is that
\[   T_0\vdash \forall z\, \pr_T(\godel{\prog(\vp)\imp \forall x\prec \omega_{\dot z}\, \vp(x)}). \]
Therefore 
\[  T_0 +\rfn(T)\vdash \forall z\,(\prog(\vp)\imp \forall x\prec \omega_z\, \vp(x)). \]
The conclusion follows.
 
The converse direction (cf.\  the proof of \cite[Theorem 14]{KL68}) is based on the fact that in  a suitable infinitary system for $\omega$-arithmetic one can eliminate cuts and then prove by transfinite induction on
$\epsnot$ that every sequent in a cut-free $\omega$-proof is true. 

More in detail. Let $\psi$ be a sentence axiomatizing $T_0$. Given a formula $\vp$, we aim to show  	
\[  T_0 + \ti(\epsnot)\vdash    \pr_T(\godel{\vp(\dot x)})\imp \vp(x). \]   
Again, let us consider a two-sorted extension of a finitary Tait sequent calculus for $\pa$, call it $\pa^2$, and its infinitary counterpart with the $\omega$-rule, call it $\pa^2_\infty$.\footnote{Tait one-sided calculi for $\pa$ and its infinitary counterpart, call it $\pa_\infty$, are described in \cite{B97, B91}.  One obtains a two-sorted extension by simply adding the appropriate axioms  and set quantifier rules to such calculi (cf.\ \cite[Sect.\ 3.1]{PW22}).}
We reason informally in $T_0 + \ti(\epsnot)$. 
If $T$ proves $\vp(\bar n)$, then there is a proof of  the sequent  $\neg \psi,\vp(\bar n)$ in $\pa^2$. 
We first convert such finite proof  into an $\omega$-proof  of $\neg \psi,\vp(\bar n)$ in $\pa^2_\infty$ of height $<\omega\cdot 2$ and cut formulas of rank  $<r$, for some natural number $r$.\footnote{The bound $\omega\cdot 2$ arises from a sequent style formalization of Peano arithmetic with induction axioms instead of rules for induction. Induction rules would yield the bound $\omega^2$. The rank is defined as usual. Literals (atomic and negated atomic formulas) receive rank $0$, $\rank(\vp\circ\psi)=\max(\rank(\vp),\rank(\psi))+1$ for $\circ\in\{\lor,\land\})$, $\rank(\circ\, x\, \vp(x))=\rank(\vp(x))+1$ and $\rank(\circ\,  X\, \vp(X))=\rank(\vp(X))+1$ for $\circ \in\{\exists,\forall\}$.}  By applying full cut elimination, we then obtain a cut-free $\omega$-proof of height $<\epsnot$.  We can now show that every sequent in the proof is true by transfinite induction on $\epsnot$. As before, since $\psi$ is true,  $\vp(\bar n)$ must be true. 
This concludes the proof. 

Let us notice that the   embedding  of $\pa$ into an infinitary system with the $\omega$-rule, call it $\pa_\infty$,  and cut elimination thereof (see, e.g., \cite[Theorems 28.5 and 22.8]{Sch77}) extends to   $\pa^2$ and $\pa^2_\infty$ in a  straightforward manner. On the other hand, it is by now clear how to formalize all this   in e.g.\ $\rca$ by considering $\omega$-proofs with ordinal tags below $\varepsilon_0$.\footnote{The reader should inspect  Kreisel and L\'{e}vy's {\em hints and tips} explanation on how to deal with  infinite proof trees and  cut elimination (see \cite[pp.\ 127, 128]{KL68}).} 
In particular, $\omega$-proofs can be directly represented as infinite trees (see Mints \cite{M78}, Friedman and Sheard \cite{FS95}) or  suitably coded  by numbers (see Schwichtenberg \cite{S77} and  Buchholz \cite{B91}). What matters is that being  (a code of) an $\omega$-proof of height less than $\epsnot$ can be expressed in an arithmetic way, whether this is a property about sets or about numbers (it can be $\vP^0_1$ \cite{S77} or even primitive recursive  \cite{B91}). We point out that for the purposes of cut elimination a key element of any such coding appears to be  Mint's repetition rule \cite{M78}.
Finally, the argument by  transfinite induction can be formalized by using a partial truth predicate as before, thanks to the fact that cut-free proofs (provably) enjoy the subformula property.
\end{proof}

\section{Some remarks}\label{discussion}
The proof of uniform reflection from (transfinite) induction relies on  partial truth definitions and the assumption that $T_0$ is finitely axiomatizable. 
We have already addressed in Section \ref{basics} how to deal with partial truth definitions over $\rca$. 
With regard to finite axiomatizability, it is clear that any sufficiently strong finite theory can prove its own truth. In fact,  if $T_0\supseteq \rca$ is axiomatized by a sentence $\psi$, then 
\[ T_0\vdash \ax_{T_0}(x)\imp \tr(x), \]
by letting $\ax_{T_0}(x)$ be $x=\god{\psi}$, where $\tr(x)$ is a partial truth definition for a sufficiently large class of sentences including $\psi$.   Conversely, under mild conditions, every theory capable of proving its own truth must be finitely axiomatizable.
\begin{proposition}\label{truth}
Let $T\supseteq S$ be a recursively enumerable theory and $S\supseteq\rca$ be finitely axiomatizable. Suppose there is predicate $\T(x)$ such that 
\[ T\vdash \ax_T(x)\imp \T(x), \]
where $\ax_T$ is a $\vS_1$ definition of $T$, and suppose that 
\[ S\vdash \T(\god{\vp})\imp \vp, \]
for every axiom $\vp\in T$. Then $T$ is finitely axiomatizable.
\end{proposition}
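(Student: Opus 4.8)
The plan is to produce a single sentence $\tau$ such that $T$ and $S+\tau$ have exactly the same theorems; since $S$ is finitely axiomatizable, this immediately yields that $T$ is. The natural candidate is the formalization of ``all axioms of $T$ are true'': let $\tau$ be the sentence $\forall x\,(\ax_T(x)\imp \T(x))$. Note that $\tau$ is a genuine single sentence, because $\ax_T$ is one $\vS_1$ formula and $\T$ one formula.

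One inclusion is immediate. We have $T\supseteq S$, and $T\vdash\tau$ since $\tau$ is just the universal closure of the hypothesis $T\vdash\ax_T(x)\imp\T(x)$. Hence every theorem of $S+\tau$ is a theorem of $T$.

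For the reverse inclusion I would show that $S+\tau$ proves every axiom of $T$. Fix an axiom $\vp\in T$. Since $\ax_T$ is a $\vS_1$ definition of $T$, the sentence $\ax_T(\god{\vp})$ is true in the standard model; being a true $\vS_1$ sentence, it is provable already in $\is_1$, hence in $S$ (here we use $S\supseteq\rca$ and that $\rca$, extending $\is_1$, proves all true $\vS_1$ sentences). Therefore $S+\tau\vdash\T(\god{\vp})$, and the second hypothesis $S\vdash\T(\god{\vp})\imp\vp$ gives $S+\tau\vdash\vp$. As $\vp$ was an arbitrary axiom of $T$, every theorem of $T$ is a theorem of $S+\tau$.

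Combining the two inclusions, $T$ and $S+\tau$ prove exactly the same sentences, so $S+\tau$ --- and therefore $T$ --- is finitely axiomatizable. I do not expect a genuine obstacle here: the only points that need care are that $\tau$ is a single sentence rather than a schema (which relies on $\ax_T$ being a single $\vS_1$ formula), and that one is entitled to pass from the truth of $\ax_T(\god{\vp})$ to its provability in $S$ via provable $\vS_1$-completeness, which is available because $S$ extends $\is_1$.
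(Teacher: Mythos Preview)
Your proof is correct and follows exactly the same approach as the paper: you show that $S+\tau$, with $\tau=\forall x\,(\ax_T(x)\imp\T(x))$, is a finite axiomatization of $T$, using $\vS_1$ completeness to get $S\vdash\ax_T(\god{\vp})$ for each axiom $\vp\in T$. The paper's own proof is just a two-sentence condensation of precisely this argument.
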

\begin{proof}
The theory  $S +\{\forall x\,(\ax_T(x)\imp \T(x))\}$ is finitely axiomatizable and equivalent to $T$.  Note that, by $\vS_1$ completeness,  $\rca\vdash \ax_T(\god{\vp})$ for every $\vp\in T$. 
\end{proof}
In general, Theorem \ref{thm uniform} fails for infinite theories of bounded complexity (e.g., axioms with a bound on the number of set quantifiers or even axioms with bounded rank).  
The following simple example is a case in point. 

\begin{example}
Let $T_0$  be $\rca  +\{ \emptyset^{(\bar n)} \text{ exists }\colon n\in\omega\}$. Note that $\rca$ is finitely axiomatizable and hence $T_0$ is equivalent to a theory of bounded rank.\footnote{One can express the existence of the $n$-th jump by a single formula $\exists Z\, \exists X\, ((X)_0=\emptyset\land (\forall m<n)\, (X)_{m+1}=(X)'_m\land Z=(X)_n)$, where $(X)_m=\{n\mid (m,n)\in X\}$ is the $m$-th column of $X$.} We claim that $T_0 +\ti(\epsnot)$ does not prove uniform reflection over $T_0$. In fact,  $T_0\vdash \forall x\,\pr_{T_0}(\godel{\emptyset^{(\dot x)} \text{ exists}})$.
By reflection one would obtain 
\[  T_0 +\ti(\epsnot)\vdash \forall x\,(\emptyset^{(x)} \text{ exists}). \]
A standard compactness argument shows that there is a model of $T_0 +\ti(\epsnot)$ where $\forall x\,(\emptyset^{(x)} \text{ exists})$ fails. In the compactness argument use the fact that for every $n$ the $\omega$-model $\{X\subseteq\omega\colon X\leq_T \emptyset^{(n)}\}$ is a model of $\rca + \ti(\epsnot)$ that satisfies
\[   \emptyset^{(\bar n)} \text{ exists and }  \exists x\,(\emptyset^{(x)} \text{ does not exist}). \]
\end{example} 


\section{Fragments}\label{sec fragments}
Let  $\rrfn{\vP^1_n}(T)$ be the restriction of $\rfn(T)$ to $\vP^1_n$ formulas. The schema $\rrfn{\vS^1_n}(T)$  is defined similarly.  
Observe that the uniform reflection schema applies to formulas $\vp(x)$ with no free variables other than $x$. 
Therefore, the schemata $\rrfn{\vP^1_n}(T)$  and $\rrfn{\vS^1_n}(T)$ refer to  $\vP^1_n$ and $\vS^1_n$ formulas $\vp(x)$ with no free variables, in particular no free set variables, other than $x$.  However, we can apply uniform reflection to formulas with finitely many number variables. 
Note that in first order arithmetic 
the schema
\[   \pr_T(\godel{\vp(\dot x_1,\ldots,\dot x_k)})\imp \vp(x_1,\ldots, x_k) \]
is equivalent (over $\ea$) to the schema with only one variable (see, e.g., Feferman \cite{F62}).  The same applies to restrictions of uniform reflection in the $\vP_n$ and $\vS_n$ hierarchy. 
Similarly,   $\vP^1_n$ uniform reflection is equivalent (over $\rca$) to its multivariate version.  The same holds for $\vS^1_n$ uniform reflection. For example, given a $\vP^1_n$ formula $\vp(x_1,\ldots,x_k)$ with the free variables shown, consider the formula 
\[   \psi(x)\ =_{\mathrm{def}}\   \forall x_1\cdots\forall x_k\,((x)_1=x_1\land\cdots\land (x)_k=x_k\imp \vp(x_1,\ldots,x_k)). \]
Then  $\psi(x)$ is $\vP^1_n$ in $\rca$, 
\[ \rca\vdash \forall x_1\cdots\forall x_k\,\pr_T(\godel{\vp(\dot x_1,\ldots,\dot x_k)})\biimp \forall x\,\pr_T(\godel{\psi(\dot x)}), \]
and
\begin{align*}
 \rca & \vdash   \forall x_1\cdots\forall x_k\,(\pr_T(\godel{\vp(\dot x_1,\ldots,\dot x_k)})\imp \vp(x_1,\ldots,x_k))\biimp \\  & \biimp \forall x\,(\pr_T(\godel{\psi(\dot x)})\imp \psi(x)).
\end{align*}


For a fine characterization of uniform reflection, we need to consider lightface versions of induction and transfinite induction up to $\epsnot$. Let $\lind{\vP^1_n}$ be  the restriction of induction to $\vP^1_n$ formulas with no set parameters. Same definition applies to  $\rti{\vP^1_n}(\epsnot)$. 

\begin{theorem}\label{fragments}
Let  $T_0\supseteq\rca$ be a second order arithmetic theory  axiomatized by a $\vP^1_2$ sentence, and let $n\geq 1$. Let $T$ denote $T_0$ plus the schema of full induction. Over $T_0$, 
	\[  \tag{5}  \rrfn{\vP^1_{n+2}}(T_0) \ = \ \ind{\vP^1_n}  \ \supseteq \  \lind{\vP^1_n} \ = \ \rrfn{\vS^1_{n+1}}(T_0),    \]
	\[  \tag{6}  \rrfn{\vP^1_{n+2}}(T)  \ = \ \rti{\vP^1_n}(\epsnot)  \ \supseteq \ \rti{\vP^1_n}(\epsnot)^{-}  \ = \  \rrfn{\vS^1_{n+1}}(T).   \]
More in general, if $T_0$ is axiomatized by a $\vP^1_{k+2}$ sentence, then the above relations hold for all $n\geq k+1$.
\end{theorem}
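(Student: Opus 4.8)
The plan is to adapt the proof-theoretic machinery of Theorem~\ref{thm uniform} to track the complexity of formulas, replacing the partial truth definitions for bounded-rank classes $R_m$ by the $\vP^1_n$ partial truth definitions for classes $P_{n,m}$ described in Section~\ref{basics}. Throughout, I would use the multivariate-to-univariate reduction noted above to move freely between single- and finitely-many-variable versions of reflection, and I would exploit that the hypothesis ``$T_0$ axiomatized by a $\vP^1_{k+2}$ sentence $\psi$'' forces $\neg\psi$ to be $\vS^1_{k+2}$, hence (after a dummy quantifier) $\vP^1_{k+3}$, and in any case sits at level $\leq n+2$ once $n\geq k+1$. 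This complexity bookkeeping is precisely what makes the truth predicate of the right class absorb both $\neg\psi$ and the reflected formula.

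For (5), I would prove the four inclusions separately. \emph{(a) $\ind{\vP^1_n}\subseteq\rrfn{\vP^1_{n+2}}(T_0)$:} for a $\vP^1_n$ formula $\vp(x)$, the single induction instance for $\vp$ is $\vP^1_{n+1}$ (or thereabouts), and as in the proof of (3) it has a primitive-recursive-in-$x$ proof in pure logic, so $T_0$ proves $\forall x\,\pr_{T_0}(\godel{\mathrm{Ind}_\vp(\dot x)})$; reflecting with a $\vP^1_{n+2}$ instance gives the induction axiom back. The slack of two levels over $\vP^1_n$ is exactly what is needed to state the reflection instance while landing inside $\vP^1_{n+2}$. \emph{(b) $\rrfn{\vP^1_{n+2}}(T_0)\subseteq\ind{\vP^1_n}$:} reasoning inside $T_0+\ind{\vP^1_n}$, given a $\vP^1_{n+2}$ formula $\vp(x)$ with $\pr_{T_0}(\godel{\vp(\bar a)})$, produce a cut-free proof of $\neg\psi,\vp(\bar a)$; every formula in it is a subformula of the end-sequent, hence lies in some fixed $P_{n',m}$ with $n'\leq n+2$; but the truth-by-induction argument only needs induction of the complexity of the truth predicate for that class, which—because the end-sequent formulas are themselves of bounded complexity—can be arranged to be $\vP^1_n$ induction (here one uses that $\neg\psi$ is $\vS^1_{k+2}$ with $k+2\leq n+1$, so no genuine $\vP^1_{n+2}$ truth is required, only $\vP^1_n$ truth plus boolean/bounded bookkeeping). \emph{(c) $\lind{\vP^1_n}\subseteq\rrfn{\vS^1_{n+1}}(T_0)$ and (d) $\rrfn{\vS^1_{n+1}}(T_0)\subseteq\lind{\vP^1_n}$:} same arguments, noting that a \emph{parameter-free} $\vP^1_n$ induction instance can be written as a $\vS^1_{n+1}$ formula (the outermost block being $\exists$, since induction for $\vp$ is equivalent to $\exists x(\mathrm{premise}\wedge\neg\vp(x))\vee\forall x\,\vp(x)$—one packages it to have a $\vS^1_{n+1}$ shape), and conversely $\vS^1_{n+1}$ reflection, by the cut-elimination/truth argument, only yields statements provable from parameter-free $\vP^1_n$ induction because a closed $\vS^1_{n+1}$ formula's truth predicate needs only $\vP^1_n$-with-no-set-parameters induction to verify. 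The inclusion $\ind{\vP^1_n}\supseteq\lind{\vP^1_n}$ and the corresponding one in (6) are trivial.

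For (6), I would replace finitary cut elimination by the $\omega$-arithmetic argument from the proof of (4): the forward inclusion $\rti{\vP^1_n}(\epsnot)\subseteq\rrfn{\vP^1_{n+2}}(T)$ uses Gentzen's lifting—$T$ proves $\prog(\vp)\imp\forall x\prec\omega_k\,\vp(x)$ with a proof primitive recursive in $k$, using only $\vP^1_{n+k}$ induction for a $\vP^1_n$ formula $\vp$—so $T_0$ proves $\forall z\,\pr_T(\godel{\cdots\omega_{\dot z}\cdots})$, and a $\vP^1_{n+2}$ reflection instance recovers $\rti{\vP^1_n}(\epsnot)$. The reverse inclusion embeds a $T$-proof of $\vp(\bar a)$ into $\pa^2_\infty$, eliminates cuts to get height $<\epsnot$, and runs transfinite induction on $\epsnot$ for the relevant $\vP^1_n$ truth predicate; subformula property keeps everything in a fixed $P_{n,m}$, and the parameter-free clauses (c)/(d) transfer verbatim with $\rti{\vP^1_n}(\epsnot)^-$ and $\vS^1_{n+1}$ reflection. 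The final sentence of the theorem (general $k$, all $n\geq k+1$) requires no new idea: one simply re-runs the complexity count with $\neg\psi\in\vS^1_{k+2}$ and checks the inequalities $k+2\leq n+1$ at each use.

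\textbf{Main obstacle.} The delicate point is the backward direction in both (5) and (6): verifying that the truth-by-induction (resp.\ truth-by-transfinite-induction) argument over a cut-free proof of $\neg\psi,\vp(\bar a)$ can be carried out using induction \emph{only at level $\vP^1_n$} (resp.\ parameter-free $\vP^1_n$) rather than at the naive level $\vP^1_{n+2}$ suggested by the complexity of the reflected formula. This hinges on (i) choosing the $\vP^1_n$ partial truth predicate for a large enough class $P_{n,m}$ so that $\neg\psi$, $\vp$, and all their subformulas are covered—using that $\neg\psi$ is only $\vS^1_{k+2}$ and $k+1\leq n$, so its ``surplus'' quantifier complexity over $\vP^1_n$ is cosmetic—and (ii) checking that the induction on proof-height (resp.\ on $\epsnot$) needed to push truth through the inference rules is an induction whose induction formula, once the truth predicate is substituted, is itself $\vP^1_n$ (resp.\ parameter-free $\vP^1_n$). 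Getting the quantifier-complexity ledger to balance exactly at $n+2$ and $n+1$—no more, no less—is where the real care lies; everything else is a routine transcription of the arguments already sketched for Theorem~\ref{thm uniform}.
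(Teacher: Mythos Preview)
Your overall plan is right and your identification of the ``main obstacle'' is on target, but the mechanism you propose for the backward direction (from $\vP^1_n$ (transfinite) induction to $\vP^1_{n+2}$ reflection, and the lightface analogue) does not work as stated, and the missing ingredient is exactly what makes the theorem go.

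Concretely: you say one should use a $\vP^1_n$ partial truth predicate for a class $P_{n,m}$ large enough to contain ``$\neg\psi$, $\vp$, and all their subformulas'', and then run the ordinary truth-by-induction argument. But the reflected formula $\vp$ is genuinely $\vP^1_{n+2}$, so it is \emph{not} in any $P_{n,m}$; a $\vP^1_n$ truth predicate cannot evaluate it, and ``boolean/bounded bookkeeping'' does not bridge two set-quantifier alternations. The paper's proof (following Leivant) sidesteps this by \emph{not} trying to prove that the end-sequent is true. Instead one writes $\vp$ as $\forall X\,\exists Y\,\vp_0(x,X,Y)$ with $\vp_0\in\vP^1_n$, assumes for contradiction that $\vp$ is false, fixes a witness $X_0$ with $\forall Y\,\neg\vp_0(\bar x,X_0,Y)$, and then proves by $\vP^1_n$ transfinite induction only that the \emph{$\vP^1_n$ part} $\bigvee\Pi$ of each sequent in the cut-free $\omega$-proof is true under evaluations that send the eigenvariables of $\forall X$-introductions to $X_0$. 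The rules introducing $\exists X\,\neg\psi(X)$ and $\exists Y\,\vp_0(\bar x,U,Y)$ are absorbed because $\neg\psi(W)$ is $\vP^1_1$ (hence $\vP^1_n$) and because $\vp_0(\bar x,X_0,Y)$ is false by choice of $X_0$. This is where the boldface/lightface split really comes from: the witness $X_0$ sits as a set parameter in the induction formula, forcing boldface $\vP^1_n$ induction for $\vP^1_{n+2}$ reflection; in the $\vS^1_{n+1}$ case the reflected formula is $\exists X\,\forall Y\,\vp_0$ with $\vp_0\in\vS^1_{n-1}$, its negation is universal in $X$, so no fixed witness is needed and the induction formula can be taken set-parameter-free. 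Your explanation (``a closed $\vS^1_{n+1}$ formula's truth predicate needs only $\vP^1_n$-with-no-set-parameters induction'') misses this point. A smaller issue in the forward direction: the instance $\prog(\vp)\to\forall x\prec\omega_z\,\vp(x)$ (and likewise the finite-induction instance) is $\vS^1_{n+1}$, not $\vP^1_{n+1}$; the jump to $\vP^1_{n+2}$ arises precisely from universally closing over the set parameters of $\vp$, which is why the lightface case stops at $\vS^1_{n+1}$ reflection.
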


\begin{proof}
We sketch a proof of (6).  \vspace{1mm}

1. From ($\vS^1_{n+1}$) $\vP^1_{n+2}$ uniform reflection to (lightface) $\vP^1_n$ transfinite induction. Note that for a $\vP^1_n$ formula $\vp(x)$, the formula 
\[    \psi(z)\ =_{\mathrm{def}}\  \forall x\,(\forall y\prec x\, \vp(y)\imp \vp(x))\imp \forall x\prec \omega_z\, \vp(x) \]
is $\vS^1_{n+1}$ within $\rca$ (by simple quantifier manipulations). \vspace{1mm}

1.1. Let us consider the lightface case. Suppose that $\vp(x)$  has no set parameters. Note that $\vp(x)$ may contain free number variables other than $x$, say $x_1,\ldots,x_i$. We have 
\[   T_0\vdash \forall z\,\forall x_1\cdots\forall x_i\, \pr_T(\godel{\psi(\dot z,\dot x_1,\ldots,\dot x_i)}). \] 
Then one can apply (the multivariate version of) $\vS^1_{n+1}$ uniform reflection, and obtain
\[  T_0 + \rrfn{\vS^1_{n+1}}(T) \vdash \forall z\,\forall x_1\cdots \forall x_i\, \psi(z,x_1,\ldots,x_i). \]

1.2. For the boldface version, suppose that $\vp(x)$  has parameters $x_1,\ldots,x_i$ and $X_1,\ldots,X_j$. Then   one must apply (the multivariate version of) uniform reflection to 
\[ \forall X_1\cdots\forall X_j\,\psi(z,x_1,\ldots,x_i,X_1,\ldots,X_j), \]
which is $\vP^1_{n+2}$. Therefore,
\[  T_0\vdash  \forall z\,\forall x_1\cdots \forall x_i\, \pr_T(\godel{\forall X_1\cdots \forall X_j\,\psi(\dot z,\dot x_1,\ldots,\dot x_i,X_1,\ldots,X_j)}), \]
and hence
\[  T_0 + \rrfn{\vP^1_{n+2}}(T)\vdash \forall z\,\forall x_1\cdots \forall x_i\,\forall X_1\cdots\forall X_j\,\psi(z,x_1,\ldots,x_i,X_1,\ldots,X_j). \]

\vspace{1mm}

2. From $\vP^1_n$ transfinite induction to $\vP^1_{n+2}$ uniform reflection. 

Fix a $\vP^1_2$ axiomatization $\forall X\,\psi(X)$ of $T_0$. Let $\forall X\,\exists Y\,\vp(x,X,Y)$ be a $\vP^1_{n+2}$ formula with no free variables other than $x$. By definition, $\vp(x,X,Y)$ is $\vP^1_n$. We aim to show
\[  T_0 + \rti{\vP^1_n}(\epsnot)\vdash \pr_T(\godel{\forall X\,\exists Y\,\vp(\dot x,X,Y)})\imp \forall X\,\exists Y\,\vp(x,X,Y). \]
Work in $T_0 +\rti{\vP^1_n}(\epsnot)$. Suppose that $\forall X\,\exists Y\,\vp(\bar x,X,Y)$ is provable in $T$.  By embedding and cut elimination, we obtain a cut-free $\omega$-proof of height $<\epsnot$ of the sequent
\[    \Xi\ =\  \exists X\,\neg \psi(X),  \forall X\,\exists Y\, \vp(\bar x,X,Y). \]

We aim to prove  that $\forall X\,\exists Y\,\vp(\bar x,X,Y)$ is true. Suppose, towards a contradiction, that there is a set  $X_0$  such that $\neg \exists Y\, \vp(x,X_0,Y)$ is true. We are going to argue as in Leivant \cite{L83} (cf.\ also \cite[Lemma 4.3]{O87}) and note that in order to analyze a cut-free proof of a false  $\vP^1_{n+2}$ formula we only need  a partial truth predicate for (enough) $\vP^1_n$ formulas. 
 
By the subformula property, every sequent $\Gamma$ appearing in a cut-free $\omega$-proof of $\Xi$ is of the form $\Pi,\Lambda$, where $\Pi$ consists of $\vP^1_n$ formulas and  $\Lambda\subseteq \Xi \cup\{\exists Y\, \vp(\bar x,U,Y)\mid U \text{ free variable}\}$.  Call a sequent of this form a $\Xi$-sequent. Now, given a cut-free $\omega$-proof of height $<\epsnot$ of a $\Xi$-sequent, we prove by $\vP^1_n$ transfinite induction on $\epsnot$ that:

\lq\lq For every
$\Xi$-sequent $\Pi,\Lambda$ appearing in such proof, the formula $\bigvee \Pi$ is true for any evaluation of the free variables, with the proviso that  the eigenvariables of a $\forall$-introduction of $\forall X\,\exists Y\,\vp(\bar x,X,Y)$  are evaluated by the set $X_0$."
		 
The property in quotation marks can be expressed by a $\vP^1_n$ formula.  For this we use a $\vP^1_n$ truth predicate for a sufficiently large class of $\vP^1_n$ formulas, one of the form $P_{n,m}$ so that both $\neg\psi(X)$ and $\vp(x,X,Y)$ are in $P_{n,m}$. 	 Note also that  $\vP^1_n$ induction suffices to prove that $\vP^1_n$ formulas  are closed under bounded quantifiers, so that the segment \lq\lq \ldots the formula $\bigvee \Pi$ is true\ldots \rq\rq\ is indeed equivalent to a $\vP^1_n$ formula. 

We consider two key cases. Suppose  we have an inference of the form
\[  \rul{\Pi,\neg \psi(W),\Lambda}{\Pi,\Lambda,\exists X\,\neg \psi(X)}{} \]
Here, $\neg \psi(W)$ is $\vP^1_1$ and hence $\vP^1_n$. Given an evaluation of all free variables as above, we want to show that $\bigvee \Pi$ is true. By possibly extending the evaluation to $W$, say by the set $X$, we can assume by  the induction hypothesis that $\bigvee \Pi\lor \neg\psi(X)$ is true.  By the assumption $\forall X\,\psi(X)$ we can clearly rule out $\neg \psi(X)$ from being true. Hence we are done.

The other interesting case is when we have an inference of the form 
\[ \rul{\Pi,\vp(\bar x,U,V),\Lambda}{\Pi,\Lambda,\exists Y\,\vp(\bar x,U,Y)}{} \]
Fix an evaluation of all variables such that the set variable $U$ is evaluated by the set $X_0$. If necessary, we can extend the evaluation to the set variable $V$, say by the set $Y$. By the induction hypothesis, $\bigvee \Pi\lor \vp(\bar x,X_0,Y)$ is true. On the other hand, from the assumption, $\vp(\bar x,X_0,Y)$ is not true. Thereby, $\bigvee \Pi$ is true.

We now have a contradiction since the $\Pi$ part of the sequent $\Xi$ is empty or possibly consisting of $\exists X\,\neg \psi(X)$. In each case, the sequent fails to satisfy the required property. \vspace{1mm}

3. From lightface $\vP^1_{n}$ transfinite induction to $\vS^1_{n+1}$ uniform reflection.

We  work  in $T_0 + \rti{\vP^1_n}(\epsnot)^{-}$.  Suppose that $\exists X\, \forall Y\, \vp(\bar x,X,Y)$, where $\vp(x,X,Y)$ is $\vS^1_{n-1}$, is provable in $T$. Then we have a cut-free $\omega$-proof of height $<\epsnot$ of the sequent
\[    \Xi\ =\  \exists X\,\neg \psi(X),  \exists X\, \forall Y\, \vp(\bar x,X,Y). \]
We now say that a $\Xi$-sequent is  one of the form $\Sigma,\Lambda$, where $\Sigma$ consists of $\vS^1_{n-1}$ formulas, and $\Lambda\subseteq \Xi\cup\{\forall Y\, \vp(\bar x, U, Y)\mid U \text{ free variable}\}$. Suppose, towards a contradiction, that $\neg \exists X\, \forall Y\, \vp(x,X,Y)$. Again, we follow Leivant's \emph{strategy} by considering only  true $\vS^1_{n-1}$ formulas in analyzing a cut-free proof of a false $\vS^1_{n+1}$ formula.  As before, every sequent in a cut-free $\omega$-proof of $\Xi$ is a $\Xi$-sequent. We want to show that:

\lq\lq For every $\Xi$-sequent $\Sigma,\Lambda$ of a cut-free  $\omega$-proof, the formula $\bigvee \Sigma$ is true 
for every evaluation of the free  variables."

A moment's reflection shows that this can be formalized by a $\vP^1_n$ formula with no set parameters. Note that if we use trees (second order objects) to represent $\omega$-proofs, we must quantify over all possible cut-free $\omega$-proofs in order to apply lightface $\vP^1_n$ transfinite induction. Let us consider the only key case. Suppose we have an inference of the form 
\[  \rul{\Sigma,\vp(\bar x, U,V),\Lambda }{\Sigma,\Lambda,\forall Y\, \vp(\bar x,U,Y)}{} \]
where $V$ is the eigenvariable, and so does not appear in the lower sequent. Given an evaluation of the free variables by sets $X,\ldots$, where $X$ is the evaluation of $U$, we want to show that $\bigvee \Sigma(X,\ldots)$ is true. By the induction hypothesis, for any  evaluation of the extra variable $V$, say by the set  $Y$, we have that $\bigvee\Sigma(X,\ldots)\lor \vp(\bar x, X,Y)$ is true. By the eigenvariable condition, this implies that $\bigvee\Sigma(X,\ldots) \lor \forall Y\, \vp(\bar x, X,Y)$ is true. By the assumption, we can rule out the case where $\forall Y\, \vp(\bar x, X,Y)$ is true, as desired. As before, one obtains a contradiction since the end sequent fails to satisfy the required property.
\end{proof}

\subsection{Separation results}
The local reflection principle $\lrfn(T)$ for a theory $T$ is the schema consisting of sentences
\[      \pr_T(\god{\vp})\imp \vp.  \]
We now  show, under mild assumptions, that the inclusions in Theorem \ref{fragments} are strict.  This will be an immediate consequence of the following.
\begin{lemma}\label{strict}
Let $T_0$ be a   $\vP^1_2$ finitely axiomatizable theory  extending $\aca$ and  $n\geq 1$.   Then 
\[ \tag{7} T_0  +  \rrfn{\vP^1_{n+2}}(T_0)\not\vdash \lrfn_{\vS^1_{n+2}}(T_0). \]
If, moreover, $T_0 + \rrfn{\vS^1_{n+1}}(T_0) + \vS^1_{n+1}$-$\ac$ is consistent, then 
\[ \tag{8} T_0 + \rrfn{\vS^1_{n+1}}(T_0) \not\vdash \lrfn_{\vP^1_{n+1}}(T_0). \]
The same holds with respect to reflection over $T$ instead of $T_0$, where $T$ is $T_0$ plus the schema of full induction.
\end{lemma}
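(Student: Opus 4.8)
The plan is to derive the non-derivability results from G\"odel's second incompleteness theorem, by showing that each of the stronger theories in the claimed inclusions proves the consistency of a theory that would be inconsistent with the purported local reflection principle. For (7), I would argue as follows. By Theorem \ref{fragments}, $T_0 + \rrfn{\vP^1_{n+2}}(T_0)$ is the theory $T_0 + \ind{\vP^1_n}$, and this theory is finitely axiomatizable over $\aca$: indeed $\aca$ itself is finitely axiomatizable, $T_0$ is given by a single $\vP^1_2$ sentence, and $\ind{\vP^1_n}$ can be finitely axiomatized over $\aca$ by using a universal $\vP^1_n$ formula $\vp_U(e,x)$ (as recalled in Section \ref{basics}). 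Call this finite theory $S$. Now, $\rrfn{\vP^1_{n+2}}(T_0)$ proves every instance of $\vP^1_{n+2}$ local reflection for $T_0$, in particular it proves the $\vP^1_2$ sentence expressing $\con(T_0)$ — more precisely, using the fact that $T_0 + \ind{\vP^1_n}$ can prove the cut-elimination-based soundness of $\omega$-proofs (or more simply, that it proves $\con(T_0)$ because reflection trivially yields consistency). The key point is that $S = T_0 + \ind{\vP^1_n}$ proves $\con(T_0)$, hence proves $\con(S_0)$ for the finitely axiomatized subtheory $T_0$, but by G\"odel's second incompleteness theorem $S$ cannot prove $\con(S)$. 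If $S \vdash \lrfn_{\vS^1_{n+2}}(T_0)$, then since $\con(T_0)$ can be written (with the canonical proof predicate) as a $\vP^1_1$ hence $\vP^1_{n+2}$ statement whose negation, i.e. $\pr_{T_0}(\god{0=1})$, is $\vS^1_{n+2}$... here I need to be a bit careful and instead run the argument through a suitable false $\vS^1_{n+2}$ sentence. The cleaner route: take $\vp$ to be the $\vS^1_{n+2}$ sentence $\neg\con(S)$ where $S$ is the finite axiomatization of $T_0 + \rrfn{\vP^1_{n+2}}(T_0)$ over $\aca$; then $\lrfn_{\vS^1_{n+2}}(T_0)$ would give $\pr_{T_0}(\god{\neg\con(S)}) \imp \neg\con(S)$, which is not quite what I want. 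Let me fix the argument.

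The correct standard argument is: suppose for contradiction that $U := T_0 + \rrfn{\vP^1_{n+2}}(T_0) \vdash \lrfn_{\vS^1_{n+2}}(T_0)$. Since $U$ is finitely axiomatizable over $\aca$ (by the universal-formula trick and Theorem \ref{fragments}), there is a single sentence $\tau$ with $U = \aca + \tau$. Now $U$ proves $\con(T_0)$ because uniform — indeed local — $\vP^1_{n+2}$ reflection over $T_0$, applied to the $\vP^1_2$ instance $\pr_{T_0}(\god{0=1}) \imp 0=1$, yields $\neg\pr_{T_0}(\god{0=1})$, i.e.\ $\con(T_0)$. So far so good, but $\con(T_0)$ is not $\con(U)$. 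The extra ingredient: $U$ also proves $\rrfn{\vP^1_{n+2}}(T_0)$ is true — no. Instead, I recall the point of the lemma is that $\lrfn_{\vS^1_{n+2}}(T_0)$ is genuinely \emph{stronger} than uniform $\vP^1_{n+2}$ reflection in the relevant sense because it reflects on $\vS^1_{n+2}$ sentences, which include sentences of the form $\neg\con(U)$ (note $\neg\con(U)$ is $\vS^1_1 \subseteq \vS^1_{n+2}$, and $U$ itself is $\Pi^1_2$ so $\pr_U$ is $\vS^1_1$). Since $U$ extends $T_0$, the sentence $\con(U) \imp \con(T_0)$ is provable in $\rca$, but that's the wrong direction. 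The honest move, following the standard proof that $\rrfn{\vP_{n+2}}$ does not prove $\lrfn_{\vS_{n+2}}$ in first-order arithmetic (Leivant/Ono/Beklemishev), is: $\lrfn_{\vS^1_{n+2}}(T_0)$ together with $T_0 + \ind{\vP^1_n}$ proves $\con(T_0 + \ind{\vP^1_n}) = \con(U)$, because one shows in $U$ that all axioms of $U$ are true under a $\vS^1_{n+2}$-truth-predicate (using that $T_0$ is $\vP^1_2$ and the new $\ind{\vP^1_n}$ axioms follow from $\vP^1_n$-truth which $U$ has), and then $\lrfn_{\vS^1_{n+2}}(T_0)$ lets one reflect from provability-in-$T_0$-of-a-$\vS^1_{n+2}$-refutation; chaining these gives $U \vdash \con(U)$, contradicting G\"odel II. That is the argument I would spell out.

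For (8), the structure is parallel but now the stronger theory is $T_0 + \rrfn{\vS^1_{n+1}}(T_0) = T_0 + \lind{\vP^1_n}$ (the parameter-free version), and the additional hypothesis $T_0 + \rrfn{\vS^1_{n+1}}(T_0) + \vS^1_{n+1}\text{-}\ac$ is consistent is needed precisely to guarantee that the relevant consistency statement is not vacuously provable for the wrong reason — it ensures we are working with a genuinely consistent extension so that G\"odel II has bite and so that the $\vS^1_{n+1}$-choice principle lets us carry out the truth-definition soundness argument at the right level (one needs $\vS^1_{n+1}$ choice to handle the Skolem functions when verifying that the parameter-free $\vP^1_n$-induction axioms are true). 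I would again show that $T_0 + \lind{\vP^1_n} + \lrfn_{\vP^1_{n+1}}(T_0)$, via a partial truth predicate for $\vP^1_n$ formulas, proves $\con$ of its own finite axiomatization over $\aca$ (here using the consistency hypothesis to license the needed instance of choice inside the model construction), again contradicting the second incompleteness theorem. Finally, for the last sentence of the statement — that everything holds with $T$ in place of $T_0$ — I would observe that the identical argument goes through reading $\rti{\vP^1_n}(\epsnot)$ for $\ind{\vP^1_n}$ and $\rti{\vP^1_n}(\epsnot)^{-}$ for $\lind{\vP^1_n}$, invoking (6) of Theorem \ref{fragments} instead of (5); the only point to check is that $T_0 + \rti{\vP^1_n}(\epsnot)$ is still finitely axiomatizable over $\aca$, which holds because $\rti{\vP^1_n}(\epsnot)$ is a single instance of transfinite induction relative to a fixed notation system, formulated with the universal $\vP^1_n$ formula.

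The main obstacle I anticipate is getting the levels exactly right in the soundness/truth-definition arguments: one must verify that $U$ (resp.\ its $T$-analogue) really can prove, using only a $P_{n,m}$ truth predicate, that \emph{all} of its own axioms hold — the $\vP^1_2$-axiomatizability of $T_0$ is what makes $\con(T_0)$ itself provable, but the extra induction/transfinite-induction axioms of $U$ are $\vP^1_n$ for unbounded $n$-matrix-rank, so one cannot naively assert their truth via a single partial truth predicate; the resolution, exactly as in Leivant's and Ono's first-order arguments, is that one does not need to assert the axioms are true outright but only to run the cut-elimination soundness proof for $\omega$-proofs of $\Xi$-sequents as in the proof of Theorem \ref{fragments}, reflecting at the end on a single false $\vS^1_{n+2}$ (resp.\ $\vS^1_{n+1}$) sentence of the form $\neg\con(U)$. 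Making this precise — i.e.\ choosing the right false sentence and the right partial truth predicate so that the whole argument stays inside $U$ and yields $U \vdash \con(U)$ — is the delicate part; the appeal to G\"odel's second incompleteness theorem at the end is then routine.
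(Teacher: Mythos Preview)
Your overall plan --- reduce to G\"odel's second incompleteness theorem --- is the right one, but the execution takes an unnecessary detour through Theorem~\ref{fragments} and truth predicates, and for (8) there is a real gap.

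The paper's proof is much shorter and does not use Theorem~\ref{fragments}, cut elimination, or partial truth predicates at all. For (7): using a universal $\vP^1_{n+2}$ formula, the schema $\rrfn{\vP^1_{n+2}}(T_0)$ itself (not $\ind{\vP^1_n}$) is axiomatized over $\aca$ by a single $\vP^1_{n+2}$ sentence $\psi$. Then $\neg\psi$ is $\vS^1_{n+2}$, and the single local reflection instance $\pr_{T_0}(\god{\neg\psi})\imp\neg\psi$ yields $\psi\imp\con(T_0+\psi)$; if $T_0+\psi$ proved $\lrfn_{\vS^1_{n+2}}(T_0)$ we would get $T_0+\psi\vdash\con(T_0+\psi)$, contradicting G\"odel~II. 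No soundness argument, no $\Xi$-sequents, no $P_{n,m}$ truth predicate is needed. Your route via $\ind{\vP^1_n}$ can be made to produce a $\vP^1_{n+2}$ sentence as well, so it is salvageable, but the ``delicate part'' you worry about at the end simply isn't there.

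For (8) your account of the extra hypothesis is wrong, and this matters. The role of $\vS^1_{n+1}$-$\ac$ is not to ``handle Skolem functions when verifying that the parameter-free $\vP^1_n$-induction axioms are true''. The point is this: with a universal $\vS^1_{n+1}$ formula, $\rrfn{\vS^1_{n+1}}(T_0)$ is axiomatized by a sentence of the form $\forall x\,\vartheta(x)$ with $\vartheta\in\vS^1_{n+1}$; this sentence is $\vP^1_{n+2}$, \emph{not} $\vS^1_{n+1}$, so its negation is not $\vP^1_{n+1}$ and you cannot apply $\lrfn_{\vP^1_{n+1}}(T_0)$ to it. Applying $\vS^1_{n+1}$-$\ac$ to $\forall x\,\vartheta(x)$ produces a genuine $\vS^1_{n+1}$ sentence $\vp$ that implies $\forall x\,\vartheta(x)$; the consistency hypothesis guarantees that $T_0+\vp$ is consistent. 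Now $\neg\vp$ is $\vP^1_{n+1}$, so $\lrfn_{\vP^1_{n+1}}(T_0)$ gives $\vp\imp\con(T_0+\vp)$, and G\"odel~II finishes as before. Your proposed argument never produces a $\vS^1_{n+1}$ sentence of this kind, so the crucial instance of local $\vP^1_{n+1}$ reflection is never available; as written, the proof of (8) would not go through.
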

\begin{proof}
By using a universal $\vP^1_{n+2}$ formula one can see that, over $\aca$, the schema $\rrfn{\vP^1_{n+2}}(T_0)$ can be axiomatized by a $\vP^1_{n+2}$ sentence, call it $\psi$. 
Now, \[T_0 + \lrfn_{\vS^1_{n+2}}(T_0)\vdash \psi\imp \neg\pr_{T_0}(\godel{\neg\psi}).\] 
By the second incompleteness theorem, $T_0 +\rrfn{\vP^1_{n+2}}(T_0)\not\vdash\lrfn_{\vS^1_{n+2}}(T_0)$. 
This proves (7). 

Let us show (8). By using a universal $\vS^1_{n+1}$ formula,  the schema $\rrfn{\vS^1_{n+1}}(T_0)$ can be axiomatized in $\aca$ by a sentence of the form $\forall x\,\vartheta(x)$, where $\vartheta$ is $\vS^1_{n+1}$. This formula is not $\vS^1_{n+1}$ yet. Now, let $\vp$ be the $\vS^1_{n+1}$ sentence obtained by applying choice to $\forall x\,\vartheta(x)$. By the consistency assumption, $T_0 +\{\vp\}$ is consistent. As before, 
\[ T_0 +\lrfn_{\vP^1_{n+1}}(T_0)\vdash  \vp \imp  \neg\pr_{T_0}(\god{\neg\vp}).\] 
By the second incompleteness theorem, $T_0 +\{\vp\}\not\vdash\lrfn_{\vP^1_{n+1}}(T_0)$. Note that $\vp$ implies $\forall x\,\vartheta(x)$, over, say, $\rca$. It follows that $T_0 +\rrfn{\vS^1_{n+1}}(T_0)\not\vdash \lrfn_{\vP^1_{n+1}}(T_0)$.
\end{proof}

By combining Theorem \ref{fragments} with Lemma \ref{strict} we then obtain the following. 
\begin{theorem}\label{strict inclusion}
Under the hypotheses of Lemma \ref{strict},
\[  \tag{5$^\neg$} T_0 + \ind{\vP^1_n} \not\vdash \lind{\vP^1_{n+1}}, \]
\[  \tag{6$^\neg$} T_0 + \lind{\vP^1_{n+1}} \not\vdash \ind{\vP^1_n}. \]
The same holds with respect to $\rti{\vP^1_n}(\epsnot)$ and $\rti{\vP^1_{n+1}}(\epsnot)^{-}$. 
\end{theorem}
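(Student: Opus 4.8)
The plan is to deduce Theorem \ref{strict inclusion} directly from the identifications in Theorem \ref{fragments} together with the non-implications in Lemma \ref{strict}, so the proof amounts to little more than unwinding definitions. First I would recall that, under the hypotheses of Lemma \ref{strict}, the theory $T_0$ is a $\vP^1_2$ finitely axiomatizable extension of $\aca$, hence in particular extends $\rca$, so Theorem \ref{fragments} applies and gives, over $T_0$, the equalities $\rrfn{\vP^1_{n+2}}(T_0) = \ind{\vP^1_n}$ and $\rrfn{\vS^1_{n+1}}(T_0) = \lind{\vP^1_n}$, and likewise $\rrfn{\vP^1_{n+2}}(T) = \rti{\vP^1_n}(\epsnot)$ and $\rrfn{\vS^1_{n+1}}(T) = \rti{\vP^1_n}(\epsnot)^{-}$. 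Then (5$^\neg$) is obtained by the following chain: suppose for contradiction that $T_0 + \ind{\vP^1_n} \vdash \lind{\vP^1_{n+1}}$; since $\lind{\vP^1_{n+1}}$ trivially implies $\lrfn_{\vS^1_{n+2}}(T_0)$ — or rather, one shifts indices so that the reflection principle whose local form is refuted lies at the right level — one would get $T_0 + \rrfn{\vP^1_{n+2}}(T_0) \vdash \lrfn_{\vS^1_{n+2}}(T_0)$, contradicting (7). Symmetrically, if $T_0 + \lind{\vP^1_{n+1}} \vdash \ind{\vP^1_n}$, then using the identification $\lind{\vP^1_{n+1}} = \rrfn{\vS^1_{n+2}}(T_0)$ (the $n+1$ instance of the second equality in (5)) one gets $T_0 + \rrfn{\vS^1_{n+2}}(T_0) \vdash \rrfn{\vP^1_{n+2}}(T_0)$; but $\rrfn{\vP^1_{n+2}}(T_0)$ certainly implies $\lrfn_{\vP^1_{n+2}}(T_0)$, so this contradicts (8) with $n$ replaced by $n+1$, provided the corresponding consistency hypothesis holds.

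The one genuine subtlety I would flag is the bookkeeping of indices and of the consistency assumption. Lemma \ref{strict} is stated for a fixed $n \geq 1$, but to separate $\ind{\vP^1_n}$ from $\lind{\vP^1_{n+1}}$ I need to invoke Lemma \ref{strict} both at level $n$ (for the direction (5$^\neg$), where $\ind{\vP^1_n} = \rrfn{\vP^1_{n+2}}(T_0)$ and we refute $\lrfn_{\vS^1_{n+2}}(T_0) \supseteq \lind{\vP^1_{n+1}}$, using that $\lind{\vP^1_{n+1}}$ proves the reflection instances at its own level hence the local reflection for $\vS^1_{n+2}$) and at level $n+1$ (for (6$^\neg$)). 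Since Theorem \ref{strict inclusion} is asserted "under the hypotheses of Lemma \ref{strict}" for the same $n$, I would note that the level-$(n+1)$ application requires the additional consistency hypothesis "$T_0 + \rrfn{\vS^1_{n+2}}(T_0) + \vS^1_{n+2}\text{-}\ac$ is consistent", and I would simply carry this along as part of the hypotheses (it is subsumed in the phrase "under the hypotheses of Lemma \ref{strict}" read at the appropriate level, but it is worth making explicit). The argument that $\lind{\vP^1_{n+1}}$ (resp.\ $\ind{\vP^1_{n+1}}$) proves local reflection at the matching level is the standard partial-truth-definition argument already used implicitly in the paper: with a $\vP^1_{n+1}$ partial truth predicate one verifies soundness for the finitely many axioms involved, and full uniform reflection at level $n+2$ subsumes the local versions at levels $\leq n+2$.

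The final step is to observe that everything said transfers verbatim to $T$ in place of $T_0$: Theorem \ref{fragments} gives $\rrfn{\vP^1_{n+2}}(T) = \rti{\vP^1_n}(\epsnot)$ and $\rrfn{\vS^1_{n+1}}(T) = \rti{\vP^1_n}(\epsnot)^{-}$, and Lemma \ref{strict} explicitly states that its non-implications hold with reflection over $T$ as well; so repeating the two contradiction chains with $\rti{\vP^1_n}(\epsnot)$ for $\ind{\vP^1_n}$ and $\rti{\vP^1_{n+1}}(\epsnot)^{-}$ for $\lind{\vP^1_{n+1}}$ yields the $\epsnot$-versions claimed in the last sentence of the theorem. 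I do not expect any real obstacle here — the content is entirely in Theorem \ref{fragments} and Lemma \ref{strict}; the proof of Theorem \ref{strict inclusion} itself is a short deduction, and the only thing to be careful about is stating which instance (level $n$ or $n+1$, over $T_0$ or over $T$) of Lemma \ref{strict} is being used at each point and making sure the ambient consistency hypothesis matches.
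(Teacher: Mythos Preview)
Your proposal is correct and matches the paper's approach: the paper itself offers no proof beyond the sentence ``By combining Theorem \ref{fragments} with Lemma \ref{strict} we then obtain the following,'' and your unwinding---translate the induction schemata into reflection schemata via Theorem \ref{fragments}, then invoke the appropriate instance of (7) or (8) together with the trivial implication from uniform to local reflection---is exactly the intended deduction. Your observation that (6$^\neg$) requires the instance of (8) at level $n+1$, and hence the consistency hypothesis $T_0 + \rrfn{\vS^1_{n+2}}(T_0) + \vS^1_{n+2}\text{-}\ac$ rather than the level-$n$ version literally stated in Lemma \ref{strict}, is a genuine bookkeeping point that the paper glosses over; you are right to make it explicit.
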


\begin{remark}
The proof of Theorem \ref{fragments} shows that \[T_0 +(\ind{\vP^1_n})^{--}\vdash \lrfn_{\vS^1_{n+1}}(T_0), \] 
where ${}^{--}$ denotes the restriction of induction to $\vP^1_n$ formulas with no parameters at all. Hence, we can  strengthen (5$^\neg$) by replacing $\lind{\vP^1_{n+1}}$ with its ${}^{--}$ sibling. Similarly,
\[  T_0 + \rti{\vP^1_n}(\epsnot)^{--}\vdash \lrfn_{\vS^1_{n+1}}(T), \]  
and hence $T_0 +\rti{\vP^1_n}(\epsnot) \not\vdash \rti{\vP^1_{n+1}}(\epsnot)^{--}$.
\end{remark}

\begin{question}
	Can we drop the assumptions of Lemma \ref{strict}? What is the relation between local reflection, induction, transfinite induction up to $\epsnot$, and corresponding parameter free variants ${}^-$ and ${}^{--}$?
\end{question}

\section*{Acknowledgments}
We would like to thank the referees for their thorough and detailed reports.


\bibliographystyle{plain}

\end{document}